\def\Speaker{$^{*}$\protect\footnotetext{ \lowercase{}}}
\def\authorsaddresses#1{\dedicatory{#1}}
\newtheorem{theorem}{Theorem}[section]
\newtheorem{lemma}[theorem]{Lemma}
\theoremstyle{definition}
\theoremstyle{remark}
\numberwithin{equation}{section}
\begin{document}


\title[Multiplicity of Positive Solutions so ... ]{}{\textbf{Multiplicity of Positive Solutions of P-Laplacian Systems \\\begin{center}With Sign-Changing Weight Functions\end{center}}

\author[ S. S. Kazemipoor AND M. Zakeri ] { Seyyed Sadegh Kazemipoor\Speaker And Mahboobeh Zakeri}
\authorsaddresses{Department of Mathematical Analysis, Charles University, Prague, Czech Republic.\\
}
\thanks{\subjclass MMSC[2010] : {35D30,35J05,35J20}\\ { Keywords : Palais-Smale, Nehari manifold, Multiple positive solutions}}

\maketitle
\begin{center}
\noindent{\bf Abstract}
\end{center}
In this paper, we study the multiplicity of positive solutions
for the p-Laplacian systems with sign-changing weight functions.
Using the decomposition of the Nehari manifold, we prove that an
elliptic system has at least two positive solutions.
\\\\



\section{\noindent\bf Introduction}~ ~In this paper, we study the multiplicity of positive solutions
for the following elliptic system:
\[\left\{\begin{array}{ll}
-div(|\nabla u|^{p-2}\nabla u)=\lambda f(x)|u|^{q-2}u+\frac{r}{r+s}h(x)|u|^{r-2}u|v|^{s} & $in~$\Omega$$,  \\
-div(|\nabla v|^{p-2}\nabla v)=\mu g(x)|v|^{q-2}v+\frac{s}{r+s}h(x)|u|^{r}|v|^{s-2}v & $in~$\Omega$$, \\
u=v=0 &$on~$\partial\Omega.$$
\end{array}\right.\hspace{1cm}(E_{\lambda,\mu})\]
\\
Where $r>p,s>p,$~~$1<q<p<r+s<p^{*}$($p^{*}=\frac{pN}{N-p}$ if
$N>p,\quad p^{*}=\infty$ if $N\leq p$), $\Omega\subset\Bbb R^{N}$
is a bounded domain, the pair of parameters $(\lambda,\mu)\in\Bbb
R^{2}-\{(0,0)\}$, and the weight functions $f,g,h\in
C(\overline{\Omega})$ are satisfying $f^{\pm}=\max\{\pm
f,0\}\not\equiv0,\quad g^{\pm}=\max\{\pm g,0\}\not\equiv0, \quad
h^{\pm}=\max\{\pm h,0\}\not\equiv0$.\\
When $p=2$. The fact that number of positive solutions of
equation $(E_{\lambda})$ is affected by the nonlinearity terms has
been the focus of a great deal of research in recent years.\\
If the weight functions $f\equiv h\equiv 1$, the authors
Ambrosetti-Brezis-Cerami\cite{1} have investigated equation
$(E_{\lambda})$. They found that there exists $\lambda_{0}>0$
such that equation $(E_{\lambda})$ admits at least two positive
solutions for $\lambda\in(0,\lambda_{0})$, has a positive
solution for $\lambda=\lambda_{0}$ and no positive solution
exsists for $\lambda>\lambda_{0}$. Wu \cite{7} proved that equation
$(E_{\lambda})$ has at least two positive solutions under the
assumptions the weight functions $f$ change sign in
$\overline{\Omega},\quad g\equiv 1$ and $\lambda$ is sufficiently
small. For more general results, were done by de
Figueiredo-Grossez-Ubilla\cite{5}, Wu\cite{8} and Brown-Wu\cite{2}.\\
In this paper, we give a very simple variational proof which is
similar to proof of Wu (see \cite{9}) to prove the existence of at
least two positive solutions of system $(E_{\lambda,\mu})$ for
$p\in(1,p^{*})$ and so the system $(E_{\lambda,\mu})$ is similar
to the Wu system \cite{10} ( a semilinear elliptic system involving
sign-changing weight functions). In fact, we use the decomposition
of the Nehari manifold as the pair of parameters $(\lambda,\mu)$
varies to prove that the following result.
\begin{theorem} There exists $\lambda_{0}>0$ and
$\mu_{0}>0$ such that for $0<|\lambda|<\lambda_{0}$ and
$0<|\mu|<\mu_{0}$, system $(E_{\lambda,\mu})$ has at least two
positive solutions.
\end{theorem}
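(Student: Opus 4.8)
The plan is to carry out a Nehari-manifold analysis in the product Sobolev space $W=W_0^{1,p}(\Omega)\times W_0^{1,p}(\Omega)$, equipped with $\|(u,v)\|^p=\int_\Omega(|\nabla u|^p+|\nabla v|^p)\,dx$. Weak solutions of $(E_{\lambda,\mu})$ are the critical points of the energy
$$J_{\lambda,\mu}(u,v)=\frac1p\|(u,v)\|^p-\frac1q\int_\Omega(\lambda f|u|^q+\mu g|v|^q)\,dx-\frac1{r+s}\int_\Omega h|u|^r|v|^s\,dx,$$
which is not bounded below on $W$ because the highest-order term ($r+s>p$) drives it to $-\infty$ wherever $h>0$. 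First I would restrict $J_{\lambda,\mu}$ to the Nehari manifold $\mathcal{N}_{\lambda,\mu}=\{(u,v)\in W\setminus\{(0,0)\}:\langle J'_{\lambda,\mu}(u,v),(u,v)\rangle=0\}$ and study, for fixed $(u,v)$, the fibering map $\phi_{u,v}(t)=J_{\lambda,\mu}(tu,tv)$; here $(tu,tv)\in\mathcal{N}_{\lambda,\mu}$ precisely when $\phi_{u,v}'(t)=0$. Partitioning $\mathcal{N}_{\lambda,\mu}$ by the sign of $\phi_{u,v}''(1)$ yields $\mathcal{N}_{\lambda,\mu}=\mathcal{N}^+\cup\mathcal{N}^0\cup\mathcal{N}^-$, the three pieces corresponding to local minima, degenerate points, and local maxima of the fibers.

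Writing $A=\|(u,v)\|^p$, $B=\int_\Omega(\lambda f|u|^q+\mu g|v|^q)\,dx$ and $C=\int_\Omega h|u|^r|v|^s\,dx$, the Nehari identity is $A=B+C$ and a direct computation gives $\phi_{u,v}''(1)=(p-q)B-(r+s-p)C$. The decisive quantitative step is to produce $\lambda_0,\mu_0>0$ so that $\mathcal{N}^0=\emptyset$ for $0<|\lambda|<\lambda_0$, $0<|\mu|<\mu_0$: assuming a point of $\mathcal{N}^0$ exists, I would combine $\phi_{u,v}''(1)=0$ with $A=B+C$ to bound $\|(u,v)\|$ from below and above in terms of $B$ and $C$, then invoke the Sobolev embedding $W_0^{1,p}\hookrightarrow L^{r+s}$ and H\"older's inequality on $B$ to force $|\lambda|+|\mu|$ above a fixed threshold, contradicting smallness. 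With $\mathcal{N}^0=\emptyset$, $\mathcal{N}_{\lambda,\mu}$ is a complete $C^1$-submanifold, every minimizer on $\mathcal{N}^{\pm}$ is automatically a free critical point (the Lagrange multiplier vanishes because $\phi_{u,v}''(1)\neq0$ on $\mathcal{N}^{\pm}$), and eliminating $C$ shows $J_{\lambda,\mu}=(\frac1p-\frac1{r+s})A-(\frac1q-\frac1{r+s})B$ on $\mathcal{N}_{\lambda,\mu}$, which is coercive and bounded below since $q<p$.

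I would then extract the two solutions by minimizing separately on the two pieces. Setting $\alpha^{\pm}=\inf_{\mathcal{N}^{\pm}}J_{\lambda,\mu}$, the sign hypotheses $f^+,g^+,h^+\not\equiv0$ let me construct test functions making $B>0$ and $C>0$, so that the fibering analysis (case $C>0$ with $B$ small) produces genuine $\mathcal{N}^+$ and $\mathcal{N}^-$ points and both infima are finite; moreover $\alpha^+<0$. Because $r+s<p^*$, the embedding $W_0^{1,p}\hookrightarrow L^{r+s}$ is compact, so minimizing sequences converge (after passing to a subsequence) to minimizers $(u_1,v_1)\in\mathcal{N}^+$ and $(u_2,v_2)\in\mathcal{N}^-$, which are distinct and are weak solutions by the previous paragraph. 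Replacing $(u_i,v_i)$ by $(|u_i|,|v_i|)$ leaves $J_{\lambda,\mu}$ and the Nehari condition unchanged, so I may take both solutions nonnegative; elliptic regularity together with the strong maximum principle for the $p$-Laplacian then gives strict positivity of each component, completing the proof.

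The main obstacle I anticipate is the attainment of $\alpha^-$ on $\mathcal{N}^-$: one must verify that $\mathcal{N}^-$ is nonempty (delicate precisely because $h$ changes sign, so $C$ can be negative and then the fiber has no interior maximum) and, more seriously, that a minimizing sequence neither drifts toward the forbidden set $\mathcal{N}^0$ nor collapses to $0$ in the limit. Having already shown $\mathcal{N}^0=\emptyset$ separates $\overline{\mathcal{N}^+}$ from $\overline{\mathcal{N}^-}$ by a positive gap removes the first danger, and a uniform lower bound on $\|(u,v)\|$ over $\mathcal{N}^-$ rules out collapse; the interplay between these estimates and the smallness thresholds $\lambda_0,\mu_0$ is the technical heart of the argument.
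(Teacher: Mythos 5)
Your proposal is correct and follows essentially the same route as the paper: the fibering-map decomposition $\mathcal{N}^+\cup\mathcal{N}^0\cup\mathcal{N}^-$ is exactly the paper's splitting of $M_{\lambda,\mu}$ by the sign of $\langle\psi'_{\lambda,\mu}(u,v),(u,v)\rangle$, and your key steps (emptiness of $\mathcal{N}^0$ for small $|\lambda|,|\mu|$ via Sobolev--H\"older estimates, coercivity on the Nehari manifold, minimizers off $\mathcal{N}^0$ being free critical points, minimization on each piece, and positivity after replacing $(u,v)$ by $(|u|,|v|)$) correspond one-to-one to the paper's Lemmas 2.2--2.5 and Theorems 3.1--3.2. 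The only cosmetic differences are that the paper extracts Palais--Smale minimizing sequences via an Ekeland-type argument (its Lemma 2.6, quoted from Wu) rather than your direct minimization with compact embedding, and it concludes strict positivity from Trudinger's Harnack inequality rather than the strong maximum principle.
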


This paper is organized as follows. In section 2, we give some
notations and preliminaries. In section 3, we establish the
existence of Palais-Smale sequences and we prove the
system  $(E_{\lambda,\mu})$ has at least two positive solutions.

\section{\noindent\bf  Notations and Preliminaries}
~~Throughout this paper, we denote by $S_{l}$ the best Sobolev
constant for the operators $ W=W^{1,p}_{0}(\Omega)\times
W^{1,p}_{0}(\Omega) \hookrightarrow L={L^{l}(\Omega)}\times
{L^{l}(\Omega)}$ is given by
$$S_{l}=\displaystyle\inf_{(u,v)\in W-\{(0,0)\}}\frac{(\int_{\Omega}|\nabla u|^{p}+\int_{\Omega}|\nabla v|^{p})}{(\int_{\Omega}|u|^{l}+\int_{\Omega}|v|^{l})^{\frac{p}{l}}}>0$$

Where $1<l\leq p^{*}$. In particular,
$(\int_{\Omega}|u|^{l}+\int_{\Omega}|v|^{l})\leq
S^{-\frac{l}{p}}_{l}||(u,v)||^{l}$ for all $(u,v)\in W$ with the
standard norm $||(u,v)||=(\int_{\Omega}|\nabla
u|^{p}+\int_{\Omega}|\nabla v|^{p})^{\frac{1}{p}}$.\\
System $(E_{\lambda,\mu})$ is posed in the framework of the
Sobolev space $W$. Moreover, a function $(u,v)\in W$ is said to
be a weak solution  of system $(E_{\lambda,\mu})$ if
$$\int_{\Omega}|\nabla u|^{p-2}\nabla
u\nabla\varphi-\lambda\int_{\Omega}f|u|^{q-2}u\varphi-\frac{r}{r+s}\int_{\Omega}h|u|^{r-2}u|v|^{s}\varphi=0$$
for all $\varphi\in W_{0}^{1,p}(\Omega)$ and
$$\int_{\Omega}|\nabla v|^{p-2}\nabla
v\nabla\varphi-\mu\int_{\Omega}g|v|^{q-2}v\varphi-\frac{s}{r+s}\int_{\Omega}h|u|^{r}|v|^{s-2}v\varphi=0$$
for all $\varphi\in W_{0}^{1,p}(\Omega)$. Thus, the corresponding
energy functional of system $(E_{\lambda,\mu})$ is defined by
$$\begin{array}{rcl}
J_{\lambda,\mu}(u,v)&=&\frac{1}{p}||(u,v)||^{p}-\frac{1}{q}(\lambda\int_{\Omega}f|u|^{q}\\
&+&\mu\int_{\Omega}g|v|^{q})-\frac{1}{r+s}(\int_{\Omega}h|u|^{r}|v|^{s})\qquad
for~~(u,v)\in W$$
\end{array}$$
As the energy functional $J_{\lambda,\mu}$ is not bounded below
on $W$, it is useful to consider the functional on the Nehari
manifold $$M_{\lambda,\mu}=\{(u,v)\in W-\{(0,0)\}\mid\langle
J^{'}_{\lambda,\mu}(u,v),(u,v)\rangle=0\}$$ Thus, $(u,v)\in
M_{\lambda,\mu}$ if and only if
$$||(u,v)||^{p}-(\lambda\int_{\Omega}f|u|^{q}+\mu\int_{\Omega}g|v|^{q})-(\int_{\Omega}h|u|^{r}|v|^{s})=0\qquad(1)$$
Define $$\begin{array}{rcl}\psi_{\lambda,\mu}(u,v)=\langle
J^{'}_{\lambda,\mu}(u,v),(u,v)\rangle&=&||(u,v)||^{p}-(\lambda\int_{\Omega}f|u|^{q}+\mu\int_{\Omega}g|v|^{q})\\&-&\int_{\Omega}h|u|^{r}|v|^{s}.\\
\end{array}$$
Then for $(u,v)\in M_{\lambda,\mu}$,
$$\begin{array}{rcl}\langle\psi^{'}_{\lambda,\mu}(u,v),(u,v)\rangle&=&p||(u,v)||^{p}-q(\lambda\int_{\Omega}f|u|^{q}+\mu\int_{\Omega}g|v|^{q})\\
&-&(r+s)\int_{\Omega}h|u|^{r}|v|^{s}\\
&=&(p-q)(\lambda\int_{\Omega}f|u|^{q}+\mu\int_{\Omega}g|v|^{q})\\
&+&(p-r-s)\int_{\Omega}h|u|^{r}|v|^{s},\qquad(2)\\
\end{array}$$ Now, we split $M_{\lambda,\mu}$ into three parts:
$$\begin{array}{rcl}M_{\lambda,\mu}^{+}&=&\{(u,v)\in
M_{\lambda,\mu}\mid\langle\psi^{'}_{\lambda,\mu}(u,v),(u,v)\rangle>0\}\\
M_{\lambda,\mu}^{0}&=&\{(u,v)\in
M_{\lambda,\mu}\mid\langle\psi^{'}_{\lambda,\mu}(u,v),(u,v)\rangle=0\}\\
M_{\lambda,\mu}^{-}&=&\{(u,v)\in
M_{\lambda,\mu}\mid\langle\psi^{'}_{\lambda,\mu}(u,v),(u,v)\rangle<0\}\\
\end{array}$$
Then, we have the following results.\\\\

\begin{lemma}\label{maintheorem0} If $(u_{0},v_{0})$ is a local minimizer for
$J_{\lambda,\mu}$ on $M_{\lambda,\mu}$ and $(u_{0},v_{0})\notin
M_{\lambda}^{0}$, then $J_{\lambda,\mu}^{'}(u_{0},v_{0})=0$ in
$W^{'}=W^{-1,p^{'}}(\Omega)\times W^{-1,p^{'}}(\Omega)$.
\end{lemma}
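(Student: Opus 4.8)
The plan is to treat this as a constrained-minimization problem and apply the Lagrange multiplier rule on the Nehari manifold $M_{\lambda,\mu}$. First I would record that, under the stated growth restrictions $1<q<p<r+s<p^{*}$, both the energy functional $J_{\lambda,\mu}$ and the constraint functional $\psi_{\lambda,\mu}$ belong to $C^{1}(W,\mathbb{R})$, so that their derivatives $J'_{\lambda,\mu}(u_{0},v_{0})$ and $\psi'_{\lambda,\mu}(u_{0},v_{0})$ are well-defined elements of $W'$. Since $M_{\lambda,\mu}=\{(u,v)\in W\setminus\{(0,0)\}:\psi_{\lambda,\mu}(u,v)=0\}$ is exactly the zero level set of $\psi_{\lambda,\mu}$, this places us in the classical setting of Lagrange multipliers for a $C^{1}$ constraint.

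Next I would use the hypothesis $(u_{0},v_{0})\notin M^{0}_{\lambda,\mu}$ to guarantee that the constraint is nondegenerate at $(u_{0},v_{0})$. By the definition of $M^{0}_{\lambda,\mu}$ this hypothesis reads $\langle\psi'_{\lambda,\mu}(u_{0},v_{0}),(u_{0},v_{0})\rangle\neq 0$; in particular $\psi'_{\lambda,\mu}(u_{0},v_{0})\neq 0$ in $W'$, so $M_{\lambda,\mu}$ is a $C^{1}$ manifold of codimension one in a neighbourhood of $(u_{0},v_{0})$. Because $(u_{0},v_{0})$ is assumed to be a local minimizer of $J_{\lambda,\mu}$ restricted to $M_{\lambda,\mu}$, the multiplier rule then yields a scalar $\theta\in\mathbb{R}$ with $J'_{\lambda,\mu}(u_{0},v_{0})=\theta\,\psi'_{\lambda,\mu}(u_{0},v_{0})$ in $W'$.

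It then remains only to show $\theta=0$, and the idea is to evaluate this identity on the distinguished test direction $(u_{0},v_{0})$ itself. Pairing both sides with $(u_{0},v_{0})$ gives $\langle J'_{\lambda,\mu}(u_{0},v_{0}),(u_{0},v_{0})\rangle=\theta\,\langle\psi'_{\lambda,\mu}(u_{0},v_{0}),(u_{0},v_{0})\rangle$. The left-hand side equals $\psi_{\lambda,\mu}(u_{0},v_{0})$, which vanishes since $(u_{0},v_{0})\in M_{\lambda,\mu}$ by the very definition of the Nehari manifold (equation~(1)). On the right-hand side the factor $\langle\psi'_{\lambda,\mu}(u_{0},v_{0}),(u_{0},v_{0})\rangle$ is nonzero, again by the assumption $(u_{0},v_{0})\notin M^{0}_{\lambda,\mu}$. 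Hence $\theta=0$, and substituting back into the multiplier identity gives $J'_{\lambda,\mu}(u_{0},v_{0})=0$ in $W'$, as claimed.

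The only genuine point requiring care is the verification of the hypotheses of the multiplier theorem, namely that $\psi_{\lambda,\mu}$ is $C^{1}$ and that its derivative does not vanish at the minimizer; the first is secured by the subcritical growth, and the second is exactly what excluding $M^{0}_{\lambda,\mu}$ provides. Everything after the multiplier rule is a one-line cancellation, so I do not anticipate any real obstacle here; the role of the condition $(u_{0},v_{0})\notin M^{0}_{\lambda,\mu}$ is precisely to make both the applicability of the rule and the final vanishing of $\theta$ go through simultaneously.
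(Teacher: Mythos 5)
Your proof is correct and is essentially the paper's own argument: the paper simply cites Brown--Zhang [3, Theorem 2.3], and the proof of that theorem is exactly the Lagrange multiplier argument you give (existence of $\theta$ with $J'_{\lambda,\mu}(u_{0},v_{0})=\theta\,\psi'_{\lambda,\mu}(u_{0},v_{0})$, then pairing with $(u_{0},v_{0})$ and using $\psi_{\lambda,\mu}(u_{0},v_{0})=0$ together with $\langle\psi'_{\lambda,\mu}(u_{0},v_{0}),(u_{0},v_{0})\rangle\neq 0$ to force $\theta=0$). In effect you have written out in full the argument the paper delegates to the reference, including the nondegeneracy check that justifies applying the multiplier rule.
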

\begin{proof}[\bf Proof]
Our proof is almost the same  as that in Brown-Zhang[\cite{3}, theorem
2.3].
\end{proof}
\begin{lemma}\label{maintheorem8}
The energy functional $J_{\lambda,\mu}$ is coercive and bounded
below on $M_{\lambda,\mu}$.
\end{lemma}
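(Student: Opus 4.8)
The plan is to use the Nehari constraint to eliminate the highest-order term and thereby reduce the estimate of $J_{\lambda,\mu}$ to a one-variable comparison between $\|(u,v)\|^{p}$ and $\|(u,v)\|^{q}$. First I would invoke equation (1), valid for every $(u,v)\in M_{\lambda,\mu}$, to substitute $\int_{\Omega}h|u|^{r}|v|^{s}=\|(u,v)\|^{p}-(\lambda\int_{\Omega}f|u|^{q}+\mu\int_{\Omega}g|v|^{q})$ into the definition of $J_{\lambda,\mu}$. Collecting terms gives
$$J_{\lambda,\mu}(u,v)=\left(\frac{1}{p}-\frac{1}{r+s}\right)\|(u,v)\|^{p}-\left(\frac{1}{q}-\frac{1}{r+s}\right)\left(\lambda\int_{\Omega}f|u|^{q}+\mu\int_{\Omega}g|v|^{q}\right).$$
Since $1<q<p<r+s$, both coefficients $\frac{1}{p}-\frac{1}{r+s}$ and $\frac{1}{q}-\frac{1}{r+s}$ are strictly positive; I abbreviate them by $c_{1},c_{2}>0$.

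Next I would control the lower-order term from above. Using $|f|\le\|f\|_{\infty}$ and $|g|\le\|g\|_{\infty}$ together with the embedding inequality $\int_{\Omega}|u|^{q}+\int_{\Omega}|v|^{q}\le S_{q}^{-q/p}\|(u,v)\|^{q}$ recorded in Section 2, one obtains
$$\left|\lambda\int_{\Omega}f|u|^{q}+\mu\int_{\Omega}g|v|^{q}\right|\le\bigl(|\lambda|\,\|f\|_{\infty}+|\mu|\,\|g\|_{\infty}\bigr)\,S_{q}^{-q/p}\,\|(u,v)\|^{q}.$$
Because the coefficient $c_{2}$ multiplying this term is positive, replacing the term by its absolute value only decreases the right-hand side, so I would conclude
$$J_{\lambda,\mu}(u,v)\ge c_{1}\|(u,v)\|^{p}-C\|(u,v)\|^{q},\qquad C:=c_{2}\bigl(|\lambda|\,\|f\|_{\infty}+|\mu|\,\|g\|_{\infty}\bigr)S_{q}^{-q/p}.$$

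Finally I would read off both assertions from this scalar inequality. Writing $t=\|(u,v)\|$, the right-hand side equals $\phi(t)=c_{1}t^{p}-Ct^{q}$ with $p>q$; since $\phi(t)\to+\infty$ as $t\to\infty$, the functional $J_{\lambda,\mu}$ is coercive on $M_{\lambda,\mu}$, and since $\phi$ is continuous on $[0,\infty)$ with $\phi(0)=0$ and $\phi\to+\infty$, it attains a finite minimum, whence $J_{\lambda,\mu}$ is bounded below. The only delicate point is the sign bookkeeping in the first step: because $f,g,h$ are all sign-changing, none of the individual integrals can be discarded, yet the Nehari identity forces the indefinite top-order term $\int_{\Omega}h|u|^{r}|v|^{s}$ to cancel exactly, leaving a $t^{p}-t^{q}$ profile with $p>q$. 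Recognizing this cancellation is the crux; no compactness argument is required, so I expect this lemma to be the routine part of the development.
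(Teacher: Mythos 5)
Your proof is correct and takes essentially the same route as the paper: both use the Nehari constraint (1) to cancel the $\int_{\Omega}h|u|^{r}|v|^{s}$ term exactly, then control the remaining $q$-order term via the Sobolev embedding constant $S_{q}$, reducing $J_{\lambda,\mu}$ on $M_{\lambda,\mu}$ to a scalar profile $c_{1}t^{p}-Ct^{q}$ with $p>q$. If anything, your sign bookkeeping is cleaner than the paper's displayed inequality, which contains a typographical slip (a $+$ where a $-$ belongs, and $S_{q}^{q/p}$ in place of $S_{q}^{-q/p}$), but the underlying argument is identical.
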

\begin{proof}[\bf Proof] If $(u,v)\in M_{\lambda,\mu}$, then by the Sobolev
trace imbedding theorem
$$\begin{array}{rcl}J_{\lambda,\mu}(u,v)&=&\frac{1}{p}||(u,v)||^{p}-\frac{1}{q}(\lambda\int_{\Omega}f|u|^{q}+\mu\int_{\Omega}g|v|^{q})\\
&-&\frac{1}{r+s}\int_{\Omega}h|u|^{r}|v|^{s}\\
&=&\frac{r+s-p}{p(r+s)}||(u,v)||^{p}+(-\frac{1}{q}+\frac{1}{r+s})(\lambda\int_{\Omega}f|u|^{q}+\mu\int_{\Omega}g|v|^{q})\\
&\geq&
\frac{r+s-p}{p(r+s)}||(u,v)||^{p}+S_{q}^{\frac{q}{p}}(\frac{r+s-q}{q(r+s)})(|\lambda|||f||_{\infty}||u||^{q}+|\mu|||g||_{\infty}||v||^{q})\\
\end{array}$$ Thus, $J_{\lambda}$ is coercive and bounded below on
$M_{\lambda,\mu}$.
\end{proof}
\begin{lemma}\label{maintheorem3}
(i) For any $(u,v)\in M_{\lambda,\mu}^{+}$, we have
$(\lambda\int_{\Omega}f|u|^{q}+\mu\int_{\Omega}g|v|^{q})>0$
$$\begin{array}{rcl}
&(ii)& For~ any ~(u,v)\in M_{\lambda,\mu}^{0}, we~ have~
(\lambda\int_{\Omega}f|u|^{q}+\mu\int_{\Omega}g|v|^{q})>0\\
&and& \int_{\Omega}h|u|^{r}|v|^{s}>0\\
&(iii)& For ~any ~(u,v)\in M_{\lambda,\mu}^{-}, we ~have~
 \int_{\Omega}h|u|^{r}|v|^{s}>0.\\
\end{array}$$
\end{lemma}
\begin{proof}[\bf Proof] The proofs are immediate from (1) and
(2).
\end{proof}
\begin{lemma}\label{maintheorem5} The exists $\lambda_{0}>0, \mu_{0}>0$ such that for
$0<|\lambda|<\lambda_{0}$, $0<|\mu|<\mu_{0}$ we have $M_
{\lambda,\mu}^{0}=\emptyset$.
\end{lemma}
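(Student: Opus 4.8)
The plan is to argue by contradiction. I would suppose that $M_{\lambda,\mu}^{0}\neq\emptyset$ and fix a pair $(u,v)\in M_{\lambda,\mu}^{0}$. Such a pair satisfies simultaneously the membership relation $(1)$ and the defining condition $\langle\psi'_{\lambda,\mu}(u,v),(u,v)\rangle=0$ read off from $(2)$. Since $1<q<p<r+s$, both coefficients $p-q$ and $r+s-p$ are positive, and by Lemma \ref{maintheorem3}(ii) both $\lambda\int_{\Omega}f|u|^{q}+\mu\int_{\Omega}g|v|^{q}$ and $\int_{\Omega}h|u|^{r}|v|^{s}$ are strictly positive. Setting the expression in $(2)$ equal to zero would let me solve for one of these integrals in terms of the other and substitute back into $(1)$, yielding the two exact identities $\|(u,v)\|^{p}=\frac{r+s-q}{r+s-p}(\lambda\int_{\Omega}f|u|^{q}+\mu\int_{\Omega}g|v|^{q})$ and $\|(u,v)\|^{p}=\frac{r+s-q}{p-q}\int_{\Omega}h|u|^{r}|v|^{s}$.

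From the first identity I would extract an upper bound on the norm. Estimating $\lambda\int_{\Omega}f|u|^{q}+\mu\int_{\Omega}g|v|^{q}\le\Lambda(\int_{\Omega}|u|^{q}+\int_{\Omega}|v|^{q})$ with $\Lambda=\max\{|\lambda|\,\|f\|_{\infty},|\mu|\,\|g\|_{\infty}\}$, and then applying the Sobolev inequality $\int_{\Omega}|u|^{q}+\int_{\Omega}|v|^{q}\le S_{q}^{-q/p}\|(u,v)\|^{q}$ from Section 2, I would arrive at $\|(u,v)\|^{p}\le\frac{r+s-q}{r+s-p}\Lambda S_{q}^{-q/p}\|(u,v)\|^{q}$, hence $\|(u,v)\|\le\bigl(\frac{r+s-q}{r+s-p}\Lambda S_{q}^{-q/p}\bigr)^{1/(p-q)}$. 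The key feature of this bound is that, since $p-q>0$, it tends to $0$ as $\Lambda\to0$.

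From the second identity I would derive a lower bound that is independent of $\lambda,\mu$. Here the coupling term must be handled by Young's inequality, $\int_{\Omega}|u|^{r}|v|^{s}\le\frac{r}{r+s}\int_{\Omega}|u|^{r+s}+\frac{s}{r+s}\int_{\Omega}|v|^{r+s}\le\int_{\Omega}|u|^{r+s}+\int_{\Omega}|v|^{r+s}$, followed by the Sobolev inequality at the admissible exponent $r+s<p^{*}$, which together give $\int_{\Omega}h|u|^{r}|v|^{s}\le\|h\|_{\infty}S_{r+s}^{-(r+s)/p}\|(u,v)\|^{r+s}$. Since $r+s>p$, the identity then forces $\|(u,v)\|\ge\bigl(\frac{p-q}{(r+s-q)\|h\|_{\infty}}S_{r+s}^{(r+s)/p}\bigr)^{1/(r+s-p)}$, a fixed positive constant.

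Finally I would choose $\lambda_{0},\mu_{0}$ so small that whenever $0<|\lambda|<\lambda_{0}$ and $0<|\mu|<\mu_{0}$ the quantity $\Lambda$ is small enough to make the upper bound strictly smaller than the lower bound; the two displayed inequalities are then incompatible, contradicting $(u,v)\in M_{\lambda,\mu}^{0}$ and forcing $M_{\lambda,\mu}^{0}=\emptyset$. I expect the only delicate point to be the treatment of the coupling term $\int_{\Omega}h|u|^{r}|v|^{s}$, where one must pass from the product $|u|^{r}|v|^{s}$ to a single power to which the Sobolev constant $S_{r+s}$ applies; Young's inequality is the natural device for this, and everything else is bookkeeping with the two norm identities and with the constants $S_{q}$ and $S_{r+s}$.
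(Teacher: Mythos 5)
Your proposal is correct and follows essentially the same route as the paper: from the two identities $(p-q)\|(u,v)\|^{p}=(r+s-q)\int_{\Omega}h|u|^{r}|v|^{s}$ and $(r+s-p)\|(u,v)\|^{p}=(r+s-q)(\lambda\int_{\Omega}f|u|^{q}+\mu\int_{\Omega}g|v|^{q})$, the paper likewise extracts a $\lambda,\mu$-independent lower bound on $\|(u,v)\|$ (via H\"older/Young and the Sobolev constant $S_{r+s}$) and an upper bound that shrinks to $0$ as $|\lambda|,|\mu|\to0$ (via $S_{q}$), and concludes these are incompatible for small parameters. Your write-up is in fact cleaner than the paper's, whose displayed constants are slightly garbled (a stray factor of $2$, $S_{p}$ where $S_{q}$ is meant, and $\|h\|_{\infty}$ misplaced), while your explicit treatment of the coupling term by Young's inequality correctly supplies the step the paper compresses into a citation of the H\"older, Minkowski, and Sobolev inequalities.
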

\begin{proof}[\bf Proof] Suppose otherwise, that is $M_ {\lambda,\mu}^{0}\neq\emptyset$ for
all $(\lambda,\mu)\in\Bbb R^{2}-\{(0,0)\}$. Then by lemma \ref{maintheorem3},
$$\begin{array}{rcl}
0&=&\langle
J_{\lambda,\mu}^{'}(u,v),(u,v)\rangle=(p-q)||(u,v)||^{p}\\
&-&(r+s-q)\int_{\Omega}h|u|^{r}|v|^{s}\\
&=&(p-r-s)||(u,v)||^{p}-(q-r-s)(\lambda\int_{\Omega}f|u|^{q}+\mu\int_{\Omega}g|v|^{q})\\
\end{array}$$
for all $(u,v)\in M_{\lambda,\mu}^{0}$. By the H\~{o}lder
inequality, Minkowski inequality and the Sobolev imbedding
theorem,
$$||(u,v)||\geq(\frac{2(p-q)}{r+s-q}||h||_{\infty}S_{r+s}^{\frac{r+s}{p}})^{\frac{1}{(p-r-s)}}$$
and
$$||(u,v)||\leq(\frac{r+s-q}{r+s-p}(|\lambda|~||f||_{\infty}+|\mu|~||g||_{\infty}))^{\frac{1}{p-q}}S_{p}^{\frac{q}{p(p-q)}}$$
If $|\lambda|,|\mu|$ is sufficiently small, this is impossible.
Thus, we can conclude that there exists $\lambda_{0}>0, \mu_{0}>0$
such that if $0<|\lambda|<\lambda_{0}$ and $0<|\mu|<\mu_{0}$, we
have $M_ {\lambda,\mu}^{0}=\emptyset$.
\end{proof}
By lemma \ref{maintheorem5}, for $0<|\lambda|<\lambda_{0}$ and $0<|\mu|<\mu_{0}$
we write $M_{\lambda,\mu}=M_{\lambda,\mu}^{+}\cup
M_{\lambda,\mu}^{-}$ and define
$$\alpha_{\lambda,\mu}^{+}=\displaystyle\inf_{(u,v)\in
M_{\lambda,\mu}^{+}} J_{\lambda,\mu}(u,v)$$and
$$\alpha_{\lambda,\mu}^{-}=\displaystyle\inf_{(u,v)\in
M_{\lambda,\mu}^{-}} J_{\lambda,\mu}(u,v)$$Then we have the
following results.\\\\
\begin{lemma}\label{maintheorem} There exist
minimizing sequences $\{(u_{n}^{\pm},v_{n}^{\pm})\}$ in
$M_{\lambda,\mu}^{\pm}$ for $J_{\lambda,\mu}$ such that
$J_{\lambda,\mu}(u_{n}^{\pm},v_{n}^{\pm})=\alpha_{\lambda,\mu}^{\pm}+o(1)$
and $J_{\lambda,\mu}^{'}(u_{n}^{\pm},v_{n}^{\pm})=o(1)$ in
$W^{'}$.
\end{lemma}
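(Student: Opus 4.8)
The plan is to produce the required sequences by combining the coercivity of $J_{\lambda,\mu}$ on $M_{\lambda,\mu}$ with Ekeland's variational principle, and then to upgrade the approximate minimization \emph{on the manifold} into approximate criticality in the full dual space $W'$ by a Lagrange-multiplier (tangent-space decomposition) argument. First I would note that by Lemma \ref{maintheorem8} the functional $J_{\lambda,\mu}$ is bounded below on $M_{\lambda,\mu}$, hence on each of the pieces $M_{\lambda,\mu}^{\pm}$, so the infima $\alpha_{\lambda,\mu}^{\pm}$ are finite. Since Lemma \ref{maintheorem5} gives $M_{\lambda,\mu}^{0}=\emptyset$ for $0<|\lambda|<\lambda_{0}$, $0<|\mu|<\mu_{0}$, the strict sign of $\langle\psi^{'}_{\lambda,\mu},(\cdot,\cdot)\rangle$ cannot jump without passing through $M_{\lambda,\mu}^{0}$, so each $M_{\lambda,\mu}^{\pm}$ is relatively closed in $M_{\lambda,\mu}$ and, away from the origin, is a complete metric space in the norm of $W$.

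Applying Ekeland's variational principle to $J_{\lambda,\mu}$ on $M_{\lambda,\mu}^{\pm}$ then yields minimizing sequences $\{(u_{n}^{\pm},v_{n}^{\pm})\}$ satisfying the two defining properties
$$J_{\lambda,\mu}(u_{n}^{\pm},v_{n}^{\pm})<\alpha_{\lambda,\mu}^{\pm}+\tfrac{1}{n}$$
and
$$J_{\lambda,\mu}(w,z)\geq J_{\lambda,\mu}(u_{n}^{\pm},v_{n}^{\pm})-\tfrac{1}{n}\,\|(w,z)-(u_{n}^{\pm},v_{n}^{\pm})\|\qquad\text{for all }(w,z)\in M_{\lambda,\mu}^{\pm}.$$
By the coercivity of Lemma \ref{maintheorem8} these sequences are bounded in $W$, and by estimates of the type appearing in the proof of Lemma \ref{maintheorem5} they are bounded away from $(0,0)$, so $\|(u_{n}^{\pm},v_{n}^{\pm})\|$ stays in a fixed compact subinterval of $(0,\infty)$.

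The heart of the argument is to show that the first inequality forces $J^{'}_{\lambda,\mu}(u_{n}^{\pm},v_{n}^{\pm})\to 0$ in $W'$. Writing $(u_{n},v_{n}):=(u_{n}^{\pm},v_{n}^{\pm})$, I would decompose an arbitrary test function $\phi\in W$ relative to the tangent space of $M_{\lambda,\mu}$ at $(u_{n},v_{n})$ as
$$\phi=\frac{\langle\psi^{'}_{\lambda,\mu}(u_{n},v_{n}),\phi\rangle}{\langle\psi^{'}_{\lambda,\mu}(u_{n},v_{n}),(u_{n},v_{n})\rangle}\,(u_{n},v_{n})+\chi,\qquad\langle\psi^{'}_{\lambda,\mu}(u_{n},v_{n}),\chi\rangle=0.$$
Because $(u_{n},v_{n})\in M_{\lambda,\mu}$ gives $\langle J^{'}_{\lambda,\mu}(u_{n},v_{n}),(u_{n},v_{n})\rangle=0$, the pairing $\langle J^{'}_{\lambda,\mu}(u_{n},v_{n}),\phi\rangle$ collapses to its tangential part $\langle J^{'}_{\lambda,\mu}(u_{n},v_{n}),\chi\rangle$, which the Ekeland inequality controls by $\tfrac{1}{n}\|\chi\|$ after moving along a curve in $M_{\lambda,\mu}^{\pm}$ obtained by applying the implicit function theorem to the constraint $\psi_{\lambda,\mu}=0$.

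The main obstacle is precisely the denominator above: I must establish that $|\langle\psi^{'}_{\lambda,\mu}(u_{n},v_{n}),(u_{n},v_{n})\rangle|$ is bounded \emph{away from zero uniformly in} $n$, since only then is $\|\chi\|$ dominated by a fixed multiple of $\|\phi\|$ and the bound $\|J^{'}_{\lambda,\mu}(u_{n},v_{n})\|_{W'}\leq C/n$ follows. This uniform nondegeneracy is where Lemma \ref{maintheorem5} is used quantitatively: combining the defining relation of $M_{\lambda,\mu}^{\pm}$, the two-sided bounds on $\|(u_{n},v_{n})\|$, and the explicit expression $(2)$ for $\langle\psi^{'}_{\lambda,\mu},(u,v)\rangle$, one extracts for $|\lambda|,|\mu|$ small a constant $\delta>0$ with $|\langle\psi^{'}_{\lambda,\mu}(u_{n},v_{n}),(u_{n},v_{n})\rangle|\geq\delta$ along the sequence. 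Feeding this back into the decomposition yields $J^{'}_{\lambda,\mu}(u_{n}^{\pm},v_{n}^{\pm})=o(1)$ in $W'$, which together with the first Ekeland inequality is the assertion of the lemma.
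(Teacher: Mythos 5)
Your proposal is correct and is essentially the proof the paper itself relies on: the paper gives no independent argument but defers to Wu [\cite{7}, Proposition 9], whose proof is exactly this combination of Ekeland's variational principle on the sets $M_{\lambda,\mu}^{\pm}$ (made complete by the emptiness of $M_{\lambda,\mu}^{0}$), an implicit-function-theorem projection back onto the constraint $\psi_{\lambda,\mu}=0$, and the key uniform lower bound on $|\langle\psi^{'}_{\lambda,\mu}(u_{n},v_{n}),(u_{n},v_{n})\rangle|$ obtained by running the computation of Lemma \ref{maintheorem5} quantitatively. One small repair: for the $M_{\lambda,\mu}^{+}$ sequence the bound $\|(u_{n}^{+},v_{n}^{+})\|\geq c>0$ does not come from the Lemma \ref{maintheorem5}-type estimates (on $M_{\lambda,\mu}^{+}$ these give only an \emph{upper} bound on the norm); it comes instead from $\alpha_{\lambda,\mu}^{+}<0$ together with the inequality $J_{\lambda,\mu}(u,v)\geq -C\left(|\lambda|+|\mu|\right)\|(u,v)\|^{q}$ valid on $M_{\lambda,\mu}$, after which your nondegeneracy and tangent-space argument goes through as written.
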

\begin{proof}[\bf Proof] The proof is almost the same as that in
Wu[\cite{7}. Proposition 9].
 \end{proof}


\section{\noindent\bf  Proof of Theorem 1.1}
 Throughout this section, we
assume that the parameters $\lambda$ and $\mu$ satisfies
$0<|\lambda|<\lambda_{0}$ and $0<|\mu|<\mu_{0}$. Then we have the
following results.
\begin{theorem}\label{maintheorem1} System $(E_{\lambda,\mu})$ has
a positive solution $(u_{0}^{+},v_{0}^{+})\in
M_{\lambda,\mu}^{+}$ such that
$J_{\lambda,\mu}(u_{0}^{+},v_{0}^{+})=\alpha_{\lambda,\mu}^{+}<0$.
\end{theorem}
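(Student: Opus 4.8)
The plan is to produce the minimizer on $M_{\lambda,\mu}^{+}$ by extracting a convergent minimizing sequence at the level $\alpha_{\lambda,\mu}^{+}$ and showing that its limit is a nontrivial critical point lying in $M_{\lambda,\mu}^{+}$, which after passing to $(|u|,|v|)$ yields a positive solution. By Lemma \ref{maintheorem} there is a sequence $\{(u_{n},v_{n})\}\subset M_{\lambda,\mu}^{+}$ with $J_{\lambda,\mu}(u_{n},v_{n})=\alpha_{\lambda,\mu}^{+}+o(1)$ and $J_{\lambda,\mu}^{'}(u_{n},v_{n})=o(1)$ in $W^{'}$. Lemma \ref{maintheorem8} makes $J_{\lambda,\mu}$ coercive on $M_{\lambda,\mu}$, so the sequence is bounded in $W$; passing to a subsequence, $(u_{n},v_{n})\rightharpoonup(u_{0}^{+},v_{0}^{+})$ weakly in $W$. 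Since $q<p^{*}$ and $r+s<p^{*}$, the embeddings $W_{0}^{1,p}(\Omega)\hookrightarrow L^{q}(\Omega)$ and $W_{0}^{1,p}(\Omega)\hookrightarrow L^{r+s}(\Omega)$ are compact, so $\int_{\Omega}f|u_{n}|^{q}$, $\int_{\Omega}g|v_{n}|^{q}$ and $\int_{\Omega}h|u_{n}|^{r}|v_{n}|^{s}$ converge to the corresponding integrals evaluated at $(u_{0}^{+},v_{0}^{+})$.

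The first key step is to check that $\alpha_{\lambda,\mu}^{+}<0$. Fix a pair $(u,v)$ with $A:=\lambda\int_{\Omega}f|u|^{q}+\mu\int_{\Omega}g|v|^{q}>0$, which exists by the sign hypotheses on $f,g$ after choosing the signs of $\lambda,\mu$, and study the fibering map $\phi(t)=J_{\lambda,\mu}(tu,tv)$. Because $q<p<r+s$ and $A>0$, one has $\phi(t)=-\frac{t^{q}}{q}A+o(t^{q})<0$ for small $t>0$, while the point $t^{+}>0$ projecting $(u,v)$ onto $M_{\lambda,\mu}^{+}$ is a local minimum of $\phi$ with $\phi(t^{+})=\min_{0<t\le t^{+}}\phi(t)<0$. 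Hence $\alpha_{\lambda,\mu}^{+}\le\phi(t^{+})<0$. Nontriviality of the weak limit is then immediate: if $(u_{0}^{+},v_{0}^{+})=(0,0)$ the three integrals above tend to $0$ and $\frac{1}{p}\|(u_{n},v_{n})\|^{p}\to\alpha_{\lambda,\mu}^{+}<0$, which is impossible, so $(u_{0}^{+},v_{0}^{+})\ne(0,0)$.

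The main obstacle is upgrading weak convergence to strong convergence in $W$ and locating the limit correctly in $M_{\lambda,\mu}^{+}$ rather than $M_{\lambda,\mu}^{-}$. Since the Palais--Smale condition passes $J_{\lambda,\mu}^{'}(u_{0}^{+},v_{0}^{+})=0$ to the limit, the nontrivial pair lies on $M_{\lambda,\mu}$, where
\[ J_{\lambda,\mu}(u,v)=\frac{r+s-p}{p(r+s)}\|(u,v)\|^{p}-\frac{r+s-q}{q(r+s)}\Big(\lambda\int_{\Omega}f|u|^{q}+\mu\int_{\Omega}g|v|^{q}\Big). \]
Combining this identity for each $(u_{n},v_{n})$ with weak lower semicontinuity of the norm and the convergence of $A_{n}$ gives $J_{\lambda,\mu}(u_{0}^{+},v_{0}^{+})\le\alpha_{\lambda,\mu}^{+}<0$, which in turn forces $A_{0}:=\lambda\int_{\Omega}f|u_{0}^{+}|^{q}+\mu\int_{\Omega}g|v_{0}^{+}|^{q}>0$ (otherwise the displayed energy would be nonnegative). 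With $A_{0}>0$ and $M_{\lambda,\mu}^{0}=\emptyset$ by Lemma \ref{maintheorem5}, the fibering analysis of the previous paragraph places $(u_{0}^{+},v_{0}^{+})\in M_{\lambda,\mu}^{+}$: if instead it lay in $M_{\lambda,\mu}^{-}$, its $M_{\lambda,\mu}^{+}$-projection would carry strictly smaller energy, still $\ge\alpha_{\lambda,\mu}^{+}$, contradicting $J_{\lambda,\mu}(u_{0}^{+},v_{0}^{+})\le\alpha_{\lambda,\mu}^{+}$. Consequently $J_{\lambda,\mu}(u_{0}^{+},v_{0}^{+})\ge\alpha_{\lambda,\mu}^{+}$, so equality holds throughout, $\|(u_{n},v_{n})\|\to\|(u_{0}^{+},v_{0}^{+})\|$, and by uniform convexity of $W$ the convergence is strong, giving $J_{\lambda,\mu}(u_{0}^{+},v_{0}^{+})=\alpha_{\lambda,\mu}^{+}$. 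By Lemma \ref{maintheorem0} this minimizer is a weak solution of $(E_{\lambda,\mu})$.

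Finally, since $J_{\lambda,\mu}$ depends only on $|u|$ and $|v|$ and $\|(|u|,|v|)\|=\|(u,v)\|$, the pair $(|u_{0}^{+}|,|v_{0}^{+}|)$ also lies in $M_{\lambda,\mu}^{+}$ and attains $\alpha_{\lambda,\mu}^{+}$, so we may take $u_{0}^{+},v_{0}^{+}\ge0$. Applying the Harnack inequality and the strong maximum principle for the $p$-Laplacian to each component then yields $u_{0}^{+},v_{0}^{+}>0$ in $\Omega$, producing the desired positive solution in $M_{\lambda,\mu}^{+}$ with $J_{\lambda,\mu}(u_{0}^{+},v_{0}^{+})=\alpha_{\lambda,\mu}^{+}<0$.
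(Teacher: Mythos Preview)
Your proof is correct and follows essentially the same route as the paper: a Palais--Smale minimizing sequence on $M_{\lambda,\mu}^{+}$, coercivity plus compact embeddings to pass to a nontrivial weak limit in $M_{\lambda,\mu}^{+}$, then absolute values and the Harnack inequality for positivity. The only notable technical difference is that the paper obtains $\alpha_{\lambda,\mu}^{+}<0$ by a direct Nehari-identity computation, showing $J_{\lambda,\mu}(u,v)<-\tfrac{(p-q)(r+s-p)}{pq(r+s)}\|(u,v)\|^{p}$ for every $(u,v)\in M_{\lambda,\mu}^{+}$, whereas you use a fibering argument; in your version you should be careful that the projection $t^{+}$ onto $M_{\lambda,\mu}^{+}$ need not exist for an arbitrary pair with $A>0$ when $\int_{\Omega}h|u|^{r}|v|^{s}>0$ is large, so pick the test pair with $\int_{\Omega}h|u|^{r}|v|^{s}\le 0$ (for instance $v\equiv 0$) to guarantee it.
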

\begin{proof}[\bf Proof]
 First, we show $\alpha_{\lambda,\mu}^{+}<0$. For $(u,v)\in
M_{\lambda,\mu}^{+}$, we have
$$\begin{array}{rcl}
J_{\lambda,\mu}(u,v)&=&(\frac{1}{p}-\frac{1}{q})||(u,v)||^{p}\\
&+&(\frac{1}{q}-\frac{1}{r+s})(\int_{\Omega}h|u|^{r}|v|^{s})\\
&<&-\frac{(p-q)(r+s-p)}{pq(r+s)}||(u,v)||^{p}<0.\\
\end{array}$$
This implies $\alpha_{\lambda,\mu}^{+}<0$. By lemma \ref{maintheorem},
there exists $\{(u_{n}^{+},v_{n}^{+})\}\subset
M_{\lambda,\mu}^{+}$ such that
$J_{\lambda,\mu}(u_{n}^{+},v_{n}^{+})=\alpha_{\lambda,\mu}^{+}+0(1)$
and $J_{\lambda,\mu}^{'}(u_{n}^{+},v_{n}^{+})=o(1)$ in $W^{'}$.
Then by lemma \label{maintheorem4} (ii) and the Rellich-Kondrachov theorem there
exist a subsequence $\{(u_{n}^{+},v_{n}^{+})\}$ and
$(u_{0}^{+},v_{0}^{+})\in W$ is a solution of system
$(E_{\lambda,\mu})$ such that
 $(u_{n}^{+},v_{n}^{+})\rightarrow(u_{0}^{+},v_{0}^{+})$ weakly in $W$ and
$(u_{n}^{+},v_{n}^{+})\rightarrow (u_{0}^{+},v_{0}^{+})$ strongly
in $L$ for all $1\leq l<p^{*}$.\\
Then we have
$$\int_{\Omega}f|u_{n}^{+}|^{q}+\int_{\Omega}g|v_{n}^{+}|^{q}=\int_{\Omega}f|u_{0}^{+}|^{q}+\int_{\Omega}g|v_{0}^{+}|^{q}+o(1)$$
and
$(\lambda\int_{\Omega}f|u_{0}^{+}|^{q}+\mu\int_{\Omega}g|v_{0}^{+}|^{q})\geq
0$.\\
Now, we prove that
$(\lambda\int_{\Omega}f|u_{0}^{+}|^{q}+\mu\int_{\Omega}g|v_{0}^{+}|^{q})>
0$ otherwise, then
$$||(u_{n}^{+},v_{n}^{+})||^{p}=\int_{\Omega}h|u_{n}^{+}|^{r}|v_{n}^{+}|^{s}+o(1)$$and
$$\begin{array}{rcl}
(\frac{1}{p}-\frac{1}{r+s})||(u_{n}^{+},v_{n}^{+})||^{p}_{W}&=&\frac{1}{p}||(u_{n}^{+},v_{n}^{+})||^{p}\\
&-&\frac{1}{q}(\lambda\int_{\Omega}f|u_{n}^{+}|^{q}+\mu\int_{\Omega}g|v_{n}^{+}|^{q})\\
&-&\frac{1}{r+s}\int_{\Omega}h|u_{n}^{+}|^{r}|v_{n}^{+}|^{s}+o(1)\\
&=&\alpha_{\lambda,\mu}^{+}+o(l)\\
\end{array}$$
This is contradicts $\alpha_{\lambda,\mu}^{+}<0$. Thus,
$(\lambda\int_{\Omega}f|u_{0}^{+}|^{q}+\mu\int_{\Omega}g|v_{0}^{+}|^{q})>
0$. In particular, $(u_{0}^{+},v_{0}^{+})\in M_{\lambda,\mu}^{+}$
is a nontrivial solution of system $(E_{\lambda,\mu})$ and
$J_{\lambda,\mu}(u_{0}^{+},v_{0}^{+})\geq\alpha_{\lambda,\mu}^{+}.$
Moreover,
$$\begin{array}{rcl}
\alpha_{\lambda,\mu}^{+}&\leq&J_{\lambda,\mu}(u_{0}^{+},v_{0}^{+})=(\frac{1}{p}-\frac{1}{q})(\lambda\int_{\Omega}f|u_{0}^{+}|^{q}+\mu\int_{\Omega}g|v_{0}^{+}|^{q})\\
&+&(\frac{1}{p}-\frac{1}{r+s})\int_{\Omega}h|u_{0}^{+}|^{r}|v_{0}^{+}|^{s}\\
&=&\displaystyle\lim_{n\rightarrow\infty}J_{\lambda,\mu}(u_{n}^{+},v_{n}^{+})=\alpha_{\lambda,\mu}^{+}.\\
\end{array}$$
Consequently,
$J_{\lambda,\mu}(u_{0}^{+},v_{0}^{+})=\alpha_{\lambda,\mu}^{+}$.
Since
$J_{\lambda,\mu}(u_{0}^{+},v_{0}^{+})=J_{\lambda,\mu}(|u_{0}^{+}|,|v_{0}^{+}|)$
and $(|u_{0}^{+}|,|v_{0}^{+}|)\in M_{\lambda,\mu}^{+}$. By lemma
\ref{maintheorem0} we may assume that $u_{0}^{+}\geq0,~v_{0}^{+}\geq0$.
Moreover, by the Harnack inequality due to Trudinger \cite{6}, we
obtain $(u_{0}^{+},v_{0}^{+})$ is a positive solution of system
$(E_{\lambda,\mu})$.
\end{proof}
\begin{theorem}\label{maintheorem2} System $(E_{\lambda,\mu})$ has a positive solution
$(u_{0}^{-},v_{0}^{-})\in M_{\lambda,\mu}^{-}$ such that
$J_{\lambda,\mu}(u_{0}^{-},v_{0}^{-})=\alpha_{\lambda,\mu}^{-}$.
\end{theorem}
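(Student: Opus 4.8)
The plan is to show that the infimum $\alpha_{\lambda,\mu}^{-}$ is attained by a nonnegative pair that survives in $M_{\lambda,\mu}^{-}$, and then to upgrade nonnegativity to positivity exactly as in Theorem \ref{maintheorem1}. First I would invoke Lemma \ref{maintheorem} to fix a minimizing sequence $\{(u_{n}^{-},v_{n}^{-})\}\subset M_{\lambda,\mu}^{-}$ with $J_{\lambda,\mu}(u_{n}^{-},v_{n}^{-})=\alpha_{\lambda,\mu}^{-}+o(1)$ and $J_{\lambda,\mu}^{'}(u_{n}^{-},v_{n}^{-})=o(1)$ in $W^{'}$. By Lemma \ref{maintheorem8} the functional is coercive on $M_{\lambda,\mu}$, so this sequence is bounded in $W$; passing to a subsequence I obtain $(u_{n}^{-},v_{n}^{-})\rightharpoonup(u_{0}^{-},v_{0}^{-})$ weakly in $W$ and, by the Rellich--Kondrachov theorem, strongly in $L^{l}$ for $1\le l<p^{*}$. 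In particular the subcritical integrals $\int_{\Omega}f|u_{n}^{-}|^{q}$, $\int_{\Omega}g|v_{n}^{-}|^{q}$ and $\int_{\Omega}h|u_{n}^{-}|^{r}|v_{n}^{-}|^{s}$ converge to the corresponding integrals for $(u_{0}^{-},v_{0}^{-})$.

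Next I would check that the weak limit is nontrivial. Combining the Nehari identity $(1)$ with $(2)$ shows that every $(u,v)\in M_{\lambda,\mu}^{-}$ satisfies $(p-q)\|(u,v)\|^{p}<(r+s-q)\int_{\Omega}h|u|^{r}|v|^{s}$, and bounding the right-hand side through $\|h\|_{\infty}$ and the Sobolev constant $S_{r+s}$ (as in Lemma \ref{maintheorem5}) yields a uniform lower bound $\|(u,v)\|\ge c_{0}>0$ on $M_{\lambda,\mu}^{-}$. Hence $\|(u_{n}^{-},v_{n}^{-})\|\ge c_{0}$, and the same inequality forces $\int_{\Omega}h|u_{n}^{-}|^{r}|v_{n}^{-}|^{s}\ge c_{1}>0$; letting $n\to\infty$ and using the strong $L^{r+s}$ convergence gives $\int_{\Omega}h|u_{0}^{-}|^{r}|v_{0}^{-}|^{s}\ge c_{1}>0$, so $(u_{0}^{-},v_{0}^{-})\neq(0,0)$ and, by Lemma \ref{maintheorem3}(iii), it is a genuine candidate for $M_{\lambda,\mu}^{-}$.

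The heart of the argument, and the step I expect to be the main obstacle, is to promote the weak convergence to strong convergence in $W$; this is where the nonlinearity of the $p$-Laplacian really enters, in contrast with the semilinear case. Since $(u_{n}^{-},v_{n}^{-})\in M_{\lambda,\mu}$ we have $\langle J_{\lambda,\mu}^{'}(u_{n}^{-},v_{n}^{-}),(u_{n}^{-},v_{n}^{-})\rangle=0$, while $J_{\lambda,\mu}^{'}(u_{n}^{-},v_{n}^{-})=o(1)$ in $W^{'}$ gives $\langle J_{\lambda,\mu}^{'}(u_{n}^{-},v_{n}^{-}),(u_{0}^{-},v_{0}^{-})\rangle=o(1)$; subtracting, $\langle J_{\lambda,\mu}^{'}(u_{n}^{-},v_{n}^{-}),(u_{n}^{-}-u_{0}^{-},v_{n}^{-}-v_{0}^{-})\rangle=o(1)$. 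By the H\"older inequality and the strong $L^{q}$ and $L^{r+s}$ convergence, the terms carrying $f$, $g$ and $h$ tend to $0$, leaving $\int_{\Omega}|\nabla u_{n}^{-}|^{p-2}\nabla u_{n}^{-}\cdot\nabla(u_{n}^{-}-u_{0}^{-})+\int_{\Omega}|\nabla v_{n}^{-}|^{p-2}\nabla v_{n}^{-}\cdot\nabla(v_{n}^{-}-v_{0}^{-})=o(1)$. Subtracting the weakly convergent quantities $\int_{\Omega}|\nabla u_{0}^{-}|^{p-2}\nabla u_{0}^{-}\cdot\nabla(u_{n}^{-}-u_{0}^{-})=o(1)$ and its $v$-analogue, and invoking the elementary monotonicity inequality for the vector field $\xi\mapsto|\xi|^{p-2}\xi$, each of the two resulting nonnegative integrals tends to $0$; the $(S_{+})$ property of $-\mathrm{div}(|\nabla\cdot|^{p-2}\nabla\cdot)$ then yields $(u_{n}^{-},v_{n}^{-})\to(u_{0}^{-},v_{0}^{-})$ strongly in $W$.

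Once strong convergence is in hand the rest is routine. The Nehari identity $(1)$ passes to the limit by continuity, so $(u_{0}^{-},v_{0}^{-})\in M_{\lambda,\mu}$, and $\langle\psi_{\lambda,\mu}^{'}(u_{n}^{-},v_{n}^{-}),(u_{n}^{-},v_{n}^{-})\rangle<0$ gives $\langle\psi_{\lambda,\mu}^{'}(u_{0}^{-},v_{0}^{-}),(u_{0}^{-},v_{0}^{-})\rangle\le0$ in the limit; since $M_{\lambda,\mu}^{0}=\emptyset$ by Lemma \ref{maintheorem5}, the only possibility is $(u_{0}^{-},v_{0}^{-})\in M_{\lambda,\mu}^{-}$. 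Consequently $J_{\lambda,\mu}(u_{0}^{-},v_{0}^{-})\ge\alpha_{\lambda,\mu}^{-}$, while strong convergence gives $J_{\lambda,\mu}(u_{0}^{-},v_{0}^{-})=\lim_{n}J_{\lambda,\mu}(u_{n}^{-},v_{n}^{-})=\alpha_{\lambda,\mu}^{-}$, so the infimum is attained in $M_{\lambda,\mu}^{-}$. Finally, since $J_{\lambda,\mu}(u_{0}^{-},v_{0}^{-})=J_{\lambda,\mu}(|u_{0}^{-}|,|v_{0}^{-}|)$ with $(|u_{0}^{-}|,|v_{0}^{-}|)\in M_{\lambda,\mu}^{-}$, Lemma \ref{maintheorem0} shows this minimizer is a nontrivial weak solution of $(E_{\lambda,\mu})$, and the Harnack inequality of Trudinger \cite{6} upgrades $u_{0}^{-}\ge0$, $v_{0}^{-}\ge0$ to a strictly positive solution, exactly as in Theorem \ref{maintheorem1}.
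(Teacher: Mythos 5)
Your proposal is correct, and its skeleton matches the paper's: minimizing sequence from Lemma \ref{maintheorem}, boundedness from Lemma \ref{maintheorem8}, weak limit plus Rellich--Kondrachov, placement of the limit in $M_{\lambda,\mu}^{-}$, the energy identity, and finally the $(|u|,|v|)$ replacement with Lemma \ref{maintheorem0} and Trudinger's Harnack inequality. The genuine difference is in how the two arguments handle the analytic core. The paper's proof simply \emph{asserts} that the weak limit is a nonzero solution lying in $M_{\lambda,\mu}^{-}$, and then sidesteps strong $W$-convergence for the energy identity by a trick: since both $(u_{n}^{-},v_{n}^{-})$ and the limit satisfy the Nehari/solution constraint, $J_{\lambda,\mu}$ can be rewritten as
$\bigl(\tfrac{1}{p}-\tfrac{1}{q}\bigr)\bigl(\lambda\int_{\Omega}f|u|^{q}+\mu\int_{\Omega}g|v|^{q}\bigr)+\bigl(\tfrac{1}{p}-\tfrac{1}{r+s}\bigr)\int_{\Omega}h|u|^{r}|v|^{s}$,
which involves only subcritical integrals and so converges under strong $L^{l}$ convergence alone. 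You instead prove strong convergence in $W$ outright via the monotonicity/$(S_{+})$ property of $-\mathrm{div}(|\nabla\cdot|^{p-2}\nabla\cdot)$, and you prove nontriviality of the weak limit via the uniform lower bound $\|(u,v)\|\ge c_{0}>0$ on $M_{\lambda,\mu}^{-}$. These are precisely the two points the paper glosses over: for $p\neq 2$ one cannot pass to the limit in the principal part $|\nabla u_{n}|^{p-2}\nabla u_{n}$ by weak convergence alone, so the paper's unproved claim that the weak limit solves $(E_{\lambda,\mu})$ secretly requires exactly the $(S_{+})$ argument you supply. In short, the paper's route buys brevity (and, via the constraint-rewriting of $J_{\lambda,\mu}$, avoids needing strong $W$-convergence for the energy identity), while your route buys a self-contained and actually complete proof; your version is the one that withstands scrutiny for general $p\in(1,p^{*})$.
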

\begin{proof}[\bf Proof] By lemma \ref{maintheorem}, there exists $\{(u_{n},v_{n})\}\subset
M_{\lambda,\mu}^{-}$ such that\\
$J_{\lambda,\mu}(u_{n}^{-},v_{n}^{-})=\alpha_{\lambda,\mu}^{-}+o(1)$
and $J_{\lambda,\mu}^{'}(u_{n}^{-},v_{n}^{-})=o(1)$ in $W^{'}$.\\
By lemma \ref{maintheorem8} and the Relich-Kondrachov  theorem, there exist a
subsequence $\{(u_{n}^{-},v_{n}^{-})\}$ and
$(u_{0}^{-},v_{0}^{-})\in M_{\lambda,\mu}^{-}$ is a nonzero
solution of system $(E_{\lambda,\mu})$ such that
 $(u_{n}^{-},v_{n}^{-})\rightarrow(u_{0}^{-},v_{0}^{-})$ weakly in $W$ and
$(u_{n}^{-},v_{n}^{-})\rightarrow (u_{0}^{-},v_{0}^{-})$ strongly
in $L$. Moreover,
$$\begin{array}{rcl}
\alpha_{\lambda,\mu}^{-}&\leq&J_{\lambda,\mu}(u_{0}^{-},v_{0}^{-})=(\frac{1}{p}-\frac{1}{q})(\lambda\int_{\Omega}f|u_{0}^{-}|^{q}+\mu\int_{\Omega}g|v_{0}^{-}|^{q})\\
&+&(\frac{1}{p}-\frac{1}{r+s})\int_{\Omega}h|u_{0}^{-}|^{r}|v_{0}^{-}|^{s}\\
&=&\displaystyle\lim_{n\rightarrow\infty}J_{\lambda,\mu}(u_{n}^{-},v_{n}^{-})=\alpha_{\lambda,\mu}^{-}.\\
\end{array}$$
Consequently,
$J_{\lambda,\mu}(u_{0}^{-},v_{0}^{-})=\alpha_{\lambda,\mu}^{-}$.
Since
$J_{\lambda,\mu}(u_{0}^{-},v_{0}^{-})=J_{\lambda,\mu}(|u_{0}^{-}|,|v_{0}^{-}|)$
and $(|u_{0}^{-}|,|v_{0}^{-}|)\in M_{\lambda,\mu}^{-}$. By lemma \ref{maintheorem0}
 we may assume that $u_{0}^{-}\geq0,~v_{0}^{-}\geq0$.\\
Moreover, by the Haranack inequality due to Trudinger \cite{6}, we
obtain $(u_{0}^{-},v_{0}^{-})$ is a positive solution of system
$(E_{\lambda,\mu})$.\\
\end{proof}

Now, we begin to show the proof of {\bf theorem 1.1}: By theorem
\ref{maintheorem1}, \ref{maintheorem2} system $(E_{\lambda,\mu})$ has two positive solutions
$(u_{0}^{+},v_{0}^{+})$ and $(u_{0}^{-},v_{0}^{-})$ such that
$(u_{0}^{+},v_{0}^{+})\in M_{\lambda,\mu}^{+}$ and
$(u_{0}^{-},v_{0}^{-})\in M_{\lambda,\mu}^{-}$. Since
$M_{\lambda,\mu}^{+}\cap M_{\lambda,\mu}^{-}=\emptyset$, this
implies that $(u_{0}^{+},v_{0}^{+})$ and $(u_{0}^{-},v_{0}^{-})$
are distinct.\\\\


\end{document}